\definecolor{blue5}{RGB}{1,31,51}
\definecolor{green4}{RGB}{21,103,84}
	\setlist[enumerate,1]{label=(\roman*), ref=(\roman*)}		
	\setlist[enumerate,2]{label=\alph*), ref=(\theenumi.\alph*)} 	
	\setlist[enumerate,3]{label=\arabic*., ref=(\theenumii.\arabic*)} 	
\DeclareFontFamily{OMS}{fcmsy}{\skewchar\font48 }
\DeclareFontShape{OMS}{fcmsy}{m}{n}{%
         <-5.5> [.942] cmsy5     <5.5-6.5> [.942] cmsy6
      <6.5-7.5> [.942] cmsy7     <7.5-8.5> [.942] cmsy8
      <8.5-9.5> [.942] cmsy9     <9.5->  [.942] cmsy10
      }{}
\DeclareFontShape{OMS}{fcmsy}{b}{n}{%
       <-6> [.942] cmbsy5
      <6-8> [.942] cmbsy7
      <8->  [.942] cmbsy10
      }{}
\DeclareMathAlphabet{\mathcal}{OMS}{fcmsy}{m}{n}
\renewcommand{\tilde}{\widetilde} 
\renewcommand{\hat}{\widehat} 
\newcommand{\subeq}{\subseteq} 
\newcommand{\ssssarr}{\hbox to 15pt{\rightarrowfill}}
\newcommand{\sssarr}{\hbox to 20pt{\rightarrowfill}}
\newcommand{\ssarr}{\hbox to 30pt{\rightarrowfill}}
\newcommand{\sarr}{\hbox to 40pt{\rightarrowfill}}
\newcommand{\arr}{\hbox to 60pt{\rightarrowfill}}
\newcommand{\larr}{\hbox to 60pt{\leftarrowfill}}
\newcommand{\Arr}{\hbox to 80pt{\rightarrowfill}}
\newcommand{\ssmapright}[1]{\smash{\mathop{\ssarr}\limits^{#1}}}
\newcommand{\smapright}[1]{\smash{\mathop{\sarr}\limits^{#1}}}
\newcommand\de{\delta}
\newcommand\et{\eta}
\renewcommand\th{\theta}
\newcommand\io{\iota}
\newcommand\si{\sigma}
\newcommand\vphi{\varphi}
\newcommand\ph{\varphi}
\newcommand\ps{\psi}
\newcommand\om{\omega}
\newcommand\Om{\Omega}
\newcommand\on{\operatorname}
\newcommand\ie{i.e.\ }
\newcommand\cf{cf.\@\xspace}
\newcommand\oo{{\infty}}
\renewcommand\o{\circ}
\newcommand\x{\times}
\newcommand\Emb{\on{Emb}}
\newcommand\curv{\on{curv}}
\newcommand\Flux{\on{Flux}}
\newcommand\ex{\on{ex}}
\newcommand\Aut{\on{Aut}}
\newcommand\SU{\on{SU}}
\newcommand\cP{\mathcal{P}}
\newcommand\cQ{\mathcal{Q}}
\renewcommand\P{\mathcal{P}}
\newcommand\Diff{\on{Diff}}
\newcommand\id{\on{id}}
\newcommand\ev{\on{ev}}
\newcommand\Hom{\on{Hom}}
\newcommand\ham{\on{ham}}
\newcommand\g{\mathfrak g}
\newcommand\tangent{\mathrm{T}}
\newcommand\pa{\partial}
\newcommand\Gr{\on{Gr}}
\newcommand\dd{{\tt d}}
\newcommand\dif{{\mathop{}\!\mathrm{d}}}
\newcommand\ZZ{\mathbb Z}
\newcommand\TT{\mathbb T}
\newcommand\RR{\mathbb{R}}
\newcommand\R{\mathbb{R}}
\newcommand\T{\mathcal{T}}
\newcommand\Z{\mathbb{Z}}
\newcommand\bC{\mathbb{C}}
\newcommand\X{\mathfrak X}
\NewDocumentCommand{\thmSep}{}{8.0pt plus 2.0pt minus 4.0pt}
\declaretheoremstyle[
		spacebelow=\thmSep,
		spaceabove=\thmSep,
		headfont=\bfseries,
		notefont=\normalfont,
		bodyfont=\normalfont\itshape,
		postheadspace=1em,
		qed=$\diamondsuit$,
		headpunct={}
]{myTheorem}
\declaretheoremstyle[
		spacebelow=\thmSep,
		spaceabove=\thmSep,
		headfont=\itshape,
		notefont=\normalfont,
		bodyfont=\normalfont,
		postheadspace=1em,
		qed=$\diamondsuit$,
		headpunct={}
]{myRemark}
\declaretheoremstyle[
		spacebelow=\thmSep,
		spaceabove=\thmSep,
		headfont=\bfseries,
		notefont=\normalfont,
		bodyfont=\normalfont,
		postheadspace=1em,
		qed=$\diamondsuit$,
		headpunct={}
]{mydef}
\declaretheorem[numberwithin=section, style=myTheorem]{theorem}
\declaretheorem[sibling=theorem, style=mydef]{definition}
\declaretheorem[sibling=theorem, style=myTheorem, name=Lemma]{lemm}
\declaretheorem[sibling=theorem, style=myTheorem, name=Proposition]{prop}
\declaretheorem[sibling=theorem, style=myTheorem, name=Corollary]{coro}
\declaretheorem[sibling=theorem, name=Problem, style=myTheorem]{pro}
\declaretheorem[sibling=theorem, style=myRemark, name=Remark]{rema}
\declaretheorem[sibling=theorem, style=myTheorem, name=Example]{exam}
\begin{document}

\title{Induced differential characters \\on nonlinear Gra\ss{}mannians}
\author{Tobias Diez, Bas Janssens, Karl-Hermann Neeb and Cornelia Vizman}



\maketitle
\date 

\begin{abstract}
Using a nonlinear version of the tautological bundle over Gra\ss{}mannians, 
we construct a transgression map for differential characters from \( M \) to the nonlinear Gra\ss{}mannian $\Gr^S(M)$ of submanifolds of \( M \) of a fixed type~$S$.
In particular, we obtain prequantum circle bundles of the nonlinear Gra\ss{}mannian endowed with the Marsden--Weinstein symplectic form.
The associated Kostant--Souriau prequantum extension yields central Lie group extensions of a
group of volume-preserving diffeomorphisms integrating Lichnerowicz cocycles. 
\end{abstract}


\tableofcontents 

\pagebreak

\section{Introduction}
The orbit method provides a powerful framework to construct irreducible unitary representations of an arbitrary Lie group.
In its simplest form, the method proceeds in two stages: first, construct an equivariant prequantum line bundle over certain coadjoint orbits and, second, pass to the space of sections that are covariantly constant relative to a chosen polarization.
Although originally developed in a finite-dimensional setting, the orbit method also has been successfully applied to infinite-dimensional Lie groups~\cite{PressleySegal1986,KirillovYuriev1988,Neeb2004a}. 
In this paper, we are concerned with the construction of prequantum bundles of a certain class of coadjoint orbits of the infinite-dimensional Lie group of volume-preserving diffeomorphisms.

Let \( (M, \mu) \) be a compact manifold of dimension \( n \geq 2 \) endowed with a volume form \( \mu \).
In~\cite{HV04,Ismagilov}, certain coadjoint orbits of the group of volume-preserving diffeomorphisms \( \Diff(M, \mu) \) were described in terms of the nonlinear Gra\ss{}mannian $\Gr^S(M)$ of all oriented submanifolds of $M$ of type $S$, 
where $S$ is a compact manifold of dimension $n-2$.
The tautological bundle over this nonlinear Gra\ss{}mannian
is a nonlinear version of the tautological vector bundle over the ordinary linear Gra\ss{}mannian. It is defined by
\[
\T := \bigl\{(N,x)\in\Gr^S(M)\x M\ :\  x\in N\bigr\},
\]
with bundle projection $q_1:\T\to\Gr^S(M)$, $ q_1(N,x)=N$.
We use the tautological bundle to define a transgression of a differential character \( h \in \widehat H^{n-1}(M,\TT) \)  to a differential character $\tilde h=(q_1)_!(q_2^*h) \in \widehat H^{1}(\Gr^S(M),\TT)$ on the nonlinear Gra\ss{}mannian, where $(q_1)_!$ denotes integration along the fibers of the tautological bundle and $q_2:\T\to M$ is defined by \( q_2(N, x) = x \).

Differential characters were introduced by Cheeger and Simons~\cite{CS85}, see also~\cite{BB} for a systematic exposition. 
Differential characters of degree one classify principal circle bundles with connections through their holonomy maps.
The map that associates to a differential character its curvature form is a surjective group homomorphism $\curv:\hat H^k(M,\TT)\to\Om_\ZZ^{k+1}(M)$ onto the group of differential forms with integral periods.
In this way, starting with a volume form $\mu\in\Om^n(M)$ that has integral periods,
we get via transgression a differential character of degree one on the nonlinear Gra\ss{}mannian \( \Gr^S(M) \).
This yields an isomorphism class of principal circle bundles $\cP \rightarrow \Gr^{S}(M)$ equipped with a connection 1-form $\Theta_{\cP} \in \Omega^1(\cP)$ whose curvature is the Marsden--Weinstein symplectic form $\tilde\mu$ induced by $\mu$~\cite{Marsden-Weinstein}.
That is, \( (\cP, \Theta_{\cP}) \) is a prequantization of \( (\Gr^S(M), \tilde\mu) \).
\begin{restatable}{extratheorem}{prequantum}
\label{prequantum}
	Let $M$ be a compact manifold of dimension \( n \) endowed with a volume form $\mu$ having integral periods, and let \( S \) be a closed, oriented manifold of dimension \( n-2 \).
	For every choice of a differential character \( h \in \widehat H^{n-1}(M,\TT) \) with curvature \( \mu \), the transgression $\tilde h \in \widehat H^{1}(\Gr^S(M),\TT)$ of \( h \) yields an isomorphism class of prequantum bundles of the nonlinear Gra\ss{}mannian \( \Gr^S(M) \) endowed with the Marsden--Weinstein symplectic form $\tilde\mu$.	 
\end{restatable}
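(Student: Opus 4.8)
The plan is to assemble the theorem from two ingredients: the Cheeger--Simons dictionary between degree-one differential characters and circle bundles with connection, and the compatibility of the curvature map with the operations defining the transgression. Recall that a differential character of degree one is precisely an isomorphism class of principal $\TT$-bundles with connection, the character being the holonomy and its curvature being the curvature of the connection. Granting the transgression map $\hat H^{n-1}(M,\TT)\to\hat H^1(\Gr^S(M),\TT)$, $h\mapsto\tilde h=(q_1)_!(q_2^*h)$, we therefore obtain for each $h$ with $\curv(h)=\mu$ a well-defined isomorphism class of principal circle bundles $(\cP,\Theta_\cP)$ over $\Gr^S(M)$. It then remains to identify the curvature of $\Theta_\cP$ with the Marsden--Weinstein form, i.e. to show $\curv(\tilde h)=\tilde\mu$, and to note that $\tilde\mu$ is (weakly) symplectic so that $(\cP,\Theta_\cP)$ is genuinely a prequantization.

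For the curvature identity I would use that the curvature homomorphism is natural and intertwines both operations building the transgression: it commutes with the pullback $q_2^*$ and with the fibre integration $(q_1)_!$ of characters, on the level of differential forms. Hence
\[
\curv(\tilde h)\;=\;\curv\bigl((q_1)_!(q_2^*h)\bigr)\;=\;(q_1)_!\,q_2^*\,\curv(h)\;=\;(q_1)_!\,q_2^*\mu .
\]
The remaining task is the geometric evaluation of this fibre integral. The fibre of $q_1$ over $N\in\Gr^S(M)$ is the submanifold $N$ itself, on which $q_2$ restricts to the inclusion $N\into M$; it is compact, oriented and of dimension $n-2$, so fibre integration lowers form degree by $n-2$ and produces a $2$-form. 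Identifying $T_N\Gr^S(M)$ with the sections of the normal bundle of $N$ in $M$ and lifting two such tangent vectors $X,Y$ to $\T$ along the fibre, the definition of fibre integration gives
\[
\bigl((q_1)_!\,q_2^*\mu\bigr)_N(X,Y)\;=\;\int_N (\iota_Y\,\iota_X\,\mu)\big|_N\;=\;\tilde\mu_N(X,Y),
\]
which is exactly the Marsden--Weinstein form. Thus $\curv(\tilde h)=\tilde\mu$.

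Finally, since the curvature of any differential character has integral periods, $\tilde\mu=\curv(\tilde h)\in\Om^2_\ZZ(\Gr^S(M))$; in particular it is closed, and its (weak) non-degeneracy is the content of the Marsden--Weinstein/Haller--Vizman construction, so $(\cP,\Theta_\cP)$ is a prequantum bundle of $(\Gr^S(M),\tilde\mu)$. Two choices of $h$ with the same curvature $\mu$ differ by a flat character; since the transgression is a homomorphism it sends flat characters to flat characters, so the associated bundles share the curvature $\tilde\mu$, consistent with the assertion that one obtains an isomorphism class of prequantizations.

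The point demanding the most care, and the main obstacle, is the justification of fibre integration of differential characters for the tautological bundle: although the fibre $N\cong S$ is finite-dimensional, compact and oriented, the base $\Gr^S(M)$ is an infinite-dimensional Fr\'echet manifold, so one must ensure that a Cheeger--Simons fibre integration $(q_1)_!$ is available in this setting and that it genuinely commutes with $\curv$. Once this is secured, the evaluation of $(q_1)_!q_2^*\mu$ on normal sections and its identification with $\tilde\mu$ is the concrete computation linking the construction to the symplectic geometry of the nonlinear Gra\ss{}mannian.
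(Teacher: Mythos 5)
Your proposal is correct and takes essentially the same route as the paper: the theorem is obtained by combining the curvature-compatibility of the transgression $(q_1)_!\circ q_2^*$ (\cref{ttau}) with the holonomy correspondence between degree-one differential characters and isomorphism classes of principal circle bundles with connection (\cref{one}), citing the literature for the (weak) nondegeneracy of $\tilde\mu$. The only minor deviation is that you identify $(q_1)_!(q_2^*\mu)$ with the Marsden--Weinstein form by direct fiberwise evaluation, whereas the paper establishes this identity in~\eqref{trans} by pulling back along the submersion $\pi\colon\Emb(S,M)\to\Gr^S(M)$ and using the hat map; both arguments are sound, and the caveat you raise about fiber integration of characters over the Fr\'echet base $\Gr^S(M)$ is precisely the point the paper also settles only by reference to~\cite{BB}.
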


Note that a prequantum bundle over the nonlinear Gra\ss{}mannian with curvature $\tilde\mu$ has been previously constructed in~\cite{HV04} and~\cite{Brylinski}.
These constructions yield a description of the prequantum bundle with connection in terms of local data.
An advantage of our approach is the additional control over the holonomy; for a fixed volume form $\mu$, the set of prequantum line bundles obtained with our construction is naturally a torsor over $H^{n-1}(M,\TT)$.

Associated to the volume form $\mu$, there is a flux homomorphism \break 
${\Flux_\mu:\Diff(M,\mu)_0\to J^{n-1}(M)}$ taking values in the Jacobian torus. 
The kernel $\Diff_{\ex}(M,\mu)$ of $\Flux_\mu$ acts on the nonlinear Gra\ss{}mannian while preserving its connected components.
We restrict the above constructed prequantum bundle $\cP \rightarrow \Gr^{S}(M)$ to the connected component \( \Gr_{N}^{S}(M) \) of $N\in\Gr^S(M)$.
Following ideas of Ismagilov~\cite{Ismagilov}, the pull-back of the prequantum extension
by the action of  $\Diff_{\ex}(M,\mu)$ yields a central Lie group extension of \( \Diff_{\ex}(M,\mu) \) that integrates the Lichnerowicz cocycle $\ps_N (X,Y) = \int_N i_X i_Y\mu$ on the Lie algebra $\X_{\ex}(M,\mu)$ of exact divergence free vector fields.
\begin{restatable}{extratheorem}{extensionsIntegratingLichnerowicz}
\label{extensionsIntegratingLichnerowicz}
Let $M$ be a compact manifold of dimension \( n \) endowed with a volume form $\mu$ having integral periods.
For every closed, oriented manifold \( S \) of dimension \( n-2 \) and for every differential character $h\in \widehat{H}^{n-1}(M,\TT)$ with curvature $\mu$, the 1-dimensional central extension \( \widehat\Diff_{\ex}(M,\mu) \) of $\Diff_{\ex}(M,\mu)$ obtained by pull-back of the prequantum extension~\eqref{eq:diag} is a Fr\'echet--Lie group that integrates the Lichnerowicz cocycle $\ps_N$.
\end{restatable}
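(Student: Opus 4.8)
The strategy is to realize $\widehat{\Diff}_{\ex}(M,\mu)$ as the pull-back of the Kostant--Souriau extension~\eqref{eq:diag} along the action homomorphism and then to read off the Lie algebra cocycle. By \Cref{prequantum}, the restriction of $(\cP,\Theta_{\cP})$ to the connected component $\Gr_N^S(M)$ is a prequantum bundle for $(\Gr_N^S(M),\tilde\mu)$, so~\eqref{eq:diag} is a central $\TT$-extension of the group $\Ham(\Gr_N^S(M),\tilde\mu)$ of Hamiltonian diffeomorphisms whose Lie algebra extension is the quantomorphism algebra $(C^\infty(\Gr_N^S(M)),\{\cdot,\cdot\})$ with the constant functions as its center $\RR$. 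The first step is to check that the $\Diff_{\ex}(M,\mu)$-action on $\Gr_N^S(M)$ is Hamiltonian and lifts to connection-preserving automorphisms of $\cP$. This is exactly where exactness of the flux enters: for $X\in\X_{\ex}(M,\mu)$ the closed form $i_X\mu$ is \emph{exact}, say $i_X\mu=\dif\beta_X$, and then the fundamental vector field $\zeta_X$ of $X$ on $\Gr_N^S(M)$ is Hamiltonian with Hamiltonian $H_X(N')=\int_{N'}\beta_X-\int_N\beta_X$. Restricting to a single connected component makes the homology class $[N']$ constant, so $H_X$ is independent of the choice of primitive $\beta_X$ and vanishes at the base point $N$; equivariance of the transgression $(q_1)_!q_2^*$ under $\Diff(M,\mu)$ then upgrades this Hamiltonian action to a smooth action by elements of $\Aut(\cP,\Theta_{\cP})$.

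Pulling~\eqref{eq:diag} back along the resulting homomorphism $\al\colon\Diff_{\ex}(M,\mu)\to\Ham(\Gr_N^S(M),\tilde\mu)$ produces the central $\TT$-extension $\widehat{\Diff}_{\ex}(M,\mu)$. Its Fr\'echet--Lie structure is then inherited from~\eqref{eq:diag}: by \Cref{prequantum} the curvature $\tilde\mu=\curv(\tilde h)$ has integral periods, so $\cP$ is an honest principal $\TT$-bundle and~\eqref{eq:diag} a smooth principal $\TT$-bundle over $\Ham(\Gr_N^S(M),\tilde\mu)$; the pull-back of a smooth principal $\TT$-bundle along the smooth map $\al$ is again one, and the group law exhibits it as the asserted central extension. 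Integrality is decisive here, as it is what makes the center the \emph{circle} $\TT$ rather than $\RR$, so that the $\RR$-valued cocycle $\ps_N$ is genuinely integrated rather than merely represented.

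It remains to identify the Lie algebra cocycle of $\widehat{\Diff}_{\ex}(M,\mu)$ with $\ps_N$. The Lie algebra extension pulled back from~\eqref{eq:diag} along $X\mapsto\zeta_X$, with the linear section $X\mapsto H_X$, has cocycle $\si(X,Y)=\{H_X,H_Y\}-H_{[X,Y]}$; this is a function on $\Gr_N^S(M)$ that is constant because $\{H_X,H_Y\}$ and $\pm H_{[X,Y]}$ are Hamiltonians for one and the same vector field. Since the normalisation $H_X(N)=0$ holds by construction, evaluating the constant at the base point $N$ kills the term $H_{[X,Y]}(N)$, and Hamilton's equation together with the explicit Marsden--Weinstein form leaves
\[
\si(X,Y)=\{H_X,H_Y\}(N)=\tilde\mu_N(\zeta_X,\zeta_Y)=\int_N i_X i_Y\mu=\ps_N(X,Y),
\]
up to the overall sign fixed by the orientation conventions. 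Hence the Lie algebra of $\widehat{\Diff}_{\ex}(M,\mu)$ is the central extension of $\X_{\ex}(M,\mu)$ defined by $\ps_N$, which is precisely the assertion that the extension integrates the Lichnerowicz cocycle.

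The main obstacle is the lift in the first step: showing that $\al$ is a well-defined, smooth homomorphism of Fr\'echet--Lie groups that lifts to connection-preserving automorphisms of $\cP$. That the action is Hamiltonian—hence that a lift exists at all—is the clean consequence of flux-exactness recorded above, and the cocycle computation is then a short calculation at the base point; the genuine work lies in the smoothness and equivariance of the lift in the Fr\'echet category, for which the honest circle-bundle description furnished by \Cref{prequantum}, rather than a bare cocycle, is what makes both the pull-back construction and its differentiation legitimate.
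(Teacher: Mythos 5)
Your overall strategy---pull back the prequantization extension \eqref{eq:diag} along the action of \( \Diff_{\ex}(M,\mu) \) and identify the pulled-back Kostant--Souriau cocycle with \( \ps_N \) by evaluating at the base point---is the paper's strategy, and your final computation \( \tilde\mu_N(\zeta_X,\zeta_Y)=\int_N i_Xi_Y\mu=\ps_N(X,Y) \) matches the paper's. However, your first step misidentifies the extension being pulled back, and this creates a real gap. You take the top row of \eqref{eq:diag} to be a central extension of \( \Ham\bigl(\Gr^S_N(M),\tilde\mu\bigr) \) and argue that \( \sigma \) lands there because each fundamental vector field \( \zeta_X \) is Hamiltonian (via \( i_X\mu=\dif\beta_X \)). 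In the paper, the base of \eqref{eq:preqCE} is the group \( \Diff\bigl(\Gr_{N}^{S}(M),\tilde h\bigr) \) of \emph{holonomy-preserving} diffeomorphisms, which is exactly the image of \( \Aut(\cP_N,\Theta_{\cP}) \), and the issue is to show \( \sigma \) lands in \emph{that} group. Your flux-exactness argument produces Hamiltonians for the infinitesimal action, hence covers the identity component; but \( \Diff_{\ex}(M,\mu)=\Diff(M,h)\cap\Diff(M,\mu)_0 \) may be non-connected (a point the paper stresses), and for an element not joined to the identity by a path \emph{inside} \( \Diff_{\ex}(M,\mu) \) you have no Hamiltonian isotopy, hence no argument that \( \tilde\ph \) is Hamiltonian or liftable. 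The paper's mechanism is group-level and covers all components: by \cref{prop:transgressionDiffeoEquiv}, \( \widetilde{\ph^*h}=\tilde\ph^*\tilde h \), so every \( \ph\in\Diff_{\ex}(M,\mu)\subseteq\Diff(M,h) \) preserves the differential character \( \tilde h \), i.e.\ the holonomy, and is therefore liftable by~\cite[Thm.~2.7]{NV}. Your closing sentence gestures at ``equivariance of the transgression,'' which is indeed the right ingredient, but as written it is only used to ``upgrade'' a Hamiltonian action you have not established for the whole group.

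The second, more serious gap is the smoothness argument. You claim the Fr\'echet--Lie structure on \( \widehat\Diff_{\ex}(M,\mu) \) is ``inherited'' because the top row of \eqref{eq:diag} is a smooth principal \( \TT \)-bundle over \( \Ham\bigl(\Gr^S_N(M),\tilde\mu\bigr) \) and pull-backs of smooth bundles along smooth maps are smooth. But neither \( \Aut(\cP_N,\Theta_{\cP}) \) nor any of the diffeomorphism groups of the infinite-dimensional Fr\'echet manifold \( \Gr^S_N(M) \) is known to be a locally convex Lie group---the paper says this explicitly about \( \Aut(\cP_N,\Theta_{\cP}) \)---so there is no smooth bundle over a Lie group to pull back, and smoothness of your homomorphism \( \al \) is not even a meaningful hypothesis. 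The paper's actual technical input is the generalization of~\cite[Thm.~3.4]{NV} to non-connected groups, \cite[Thm.~A.1]{DJNV}: the manifold structure on the pull-back is constructed directly from the pull-back of the circle bundle \( \cP_N\to\Gr^S_N(M) \) along the \emph{orbit map} \( \Diff_{\ex}(M,\mu)\to\Gr^S_N(M) \), \( \ph\mapsto\ph(N) \), which \emph{is} a smooth map between honest Fr\'echet manifolds. For the same reason, the identification of the Lie algebra cocycle cannot proceed naively through Lie theory of \eqref{eq:preqCE}; the paper phrases it via the generalized Lie algebra of \cref{Def:FakeLieAlgebra}. Without invoking such a theorem (or reproving it), your proposal does not establish that \( \widehat\Diff_{\ex}(M,\mu) \) is a Fr\'echet--Lie group, which is half of the statement.
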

\noindent Since $\Diff_{\ex}(M,\mu)$ acts transitively on connected components of $\Gr^S(M)$ according to~\cite[Prop.~2]{HV04}, this shows that \( \Gr_{N}^{S}(M) \) is a coadjoint orbit of \( \widehat\Diff_{\ex}(M,\mu) \).
A similar result for the identity component of \( \Diff_{\ex}(M,\mu) \) is obtained in~\cite[Thm.~2]{HV04}.

In~\cite{DJNV} we have used transgression of differential characters from $S$ and $M$ to get differential characters of degree one on the mapping space $C^\oo(S,M)$.
The associated central extension of $\Diff_{\ex}(M,\mu)$ integrates the Lichnerowicz cocycle as well.
In \cref{sec:comparison} we show that the two central extensions of $\Diff_{\ex}(M,\mu)$  constructed using transgression on the connected component of $f$ of the embedding space $\Emb(S,M)$ on one hand
and of the connected component of $f(S)$ of the nonlinear Gra\ss{}mannian $\Gr^{S}(M)$ on the other hand
are isomorphic as central extensions of Lie groups.

\paragraph*{Notation:}
We write $\TT = \{z \in \bC \ :\  |z| = 1\}$ for the circle group which we identify with $\R/\Z$.
Accordingly, we write $\exp_\TT(t) = e^{2\pi it}$ for its exponential function.

\paragraph*{Acknowledgment}
We would like to thank I. M\u{a}rcu\c{t} for pointing out the use of tautological bundles
over Gra\ss{}mannians, which is one of the main pillars of this paper.
C.~V. was supported by a grant of  the Romanian Ministry of Education and Research, CNCS-UEFISCDI,
project number PN-III-P4-ID-PCE-2020-2888, within PNCDI III.
B.~J. and T.~D. were supported by the NWO grant 639.032.734
``Cohomology and representation theory of infinite dimensional Lie groups''. 
K.-H.~N. acknowledges support by DFG-grant NE 413/10-1.

\section{Differential characters}
\label{s1}

In order to keep the paper self-contained,
we give a brief introduction to differential characters, following~\cite{CS85} and~\cite{BB}. 
The material in this section is essentially an abridged version of~\cite[\S2 and \S3]{DJNV}. 

\subsection{Basics on differential characters}

In this section $M$ denotes a locally convex smooth manifold 
for which the de Rham isomorphism holds\begin{footnote}%
{See~\cite[Thm.~34.7]{KrMi97} 
for a de Rham Theorem in this context and sufficient criteria for it to hold.}
\end{footnote}.
Let $C_k(M)$ be the group of smooth singular $k$-chains, and let $Z_k(M)$ and $B_k(M)$ denote the subgroups of $k$-cycles and $k$-boundaries,
so that $H_k(M) := Z_k(M)/B_k(M)$ is the $k$-th smooth singular homology group.

A \emph{differential character (Cheeger--Simons character) of degree $k$} 
is a group homomorphism 
$h:Z_k(M)\to\TT$ for which there exists a 
differential form $\om\in\Om^{k+1}(M)$ such that 
$$h(\pa z)= \exp_\TT\left(\int_z\om\right)$$
for all \( z\in C_{k+1}(M) \).
Then $\omega$ is uniquely determined by $h$ and is called the {\it curvature of $h$}, 
denoted by $\curv(h)$.
We write 
\[ \widehat H^{k}(M,\TT) \subeq \Hom(Z_{k}(M),\TT) \]
for the group of differential characters\begin{footnote}%
{In~\cite{BB} this group is denoted $\hat H^{k+1}(M,\Z)$. In this sense our notations 
are compatible, although the degree is shifted by $1$. Our convention 
follows the original one introduced by Cheeger and Simons in~\cite{CS85}.}  
\end{footnote}%
of degree $k$.
The curvature $\omega= \curv(h)$ satisfies
$1=h(\pa z)=\exp_\TT(\int_z\om)$ for all $z\in Z_{k+1}(M)$, so it
belongs to the abelian group of forms with integral periods
\[ \Omega_\ZZ^{k+1}(M) :=\Big\{ \omega \in \Omega^{k+1}(M) \,:\, \int_{Z_{k+1}(M)} 
\omega \subeq \Z\Big\}.
\]
On the other hand, the identification $H^{k}(M,\TT)\cong \Hom(H_{k}(M),\TT)$ yields a natural inclusion \( j: H^{k}(M,\TT) \to \widehat H^{k}(M,\TT) \), whose image is the subgroup of differential characters with zero curvature.
We get an exact sequence 
\begin{equation}
  \label{eq:1}
0\to H^{k}(M,\TT)\ssmapright{j}  \widehat H^{k}(M,\TT)\smapright{\curv}\Om_\ZZ^{k+1}(M)\to 0.
\end{equation}

\begin{rema}\label{one}
The holonomy map $h_{(P,\theta)}$ 
of a principal circle bundle $P\to M$ with connection form $\th\in\Om^1(P)$
assigns to each
piecewise smooth $1$-cycle $c\in Z_1(M)$ an element \( h_{(P, \theta)}(c) \) in $\TT$.
If the principal connection has curvature $\om\in\Om^2(M)$, then 
$h_{(P,\theta)}(\partial z) = \exp_\TT(\int_{z}\om)$ for all \( z \in C_2(M) \), so that  
$h_{(P,\theta)}\in \widehat H^1(M,\TT)$ is a differential character with 
$\curv(h_{(P,\theta)}) = \omega$.
The assignment $(P,\th)\mapsto h_{(P,\th)}$ defines an isomorphism between the group 
of isomorphism classes of pairs $(P,\theta)$ and the 
group $\widehat H^1(M,\TT)$ of differential characters of degree 1. 
To see this, note that principal circle bundles with connection are 
classified by Deligne cohomology~\cite[Thm.~2.2.12]{Brylinski}, which is an alternative 
model for differential cohomology, \cf~\cite[Sec.~5.2]{BB}.
For a direct proof, see also~\cite[Appendix~B]{DJNV}.
\end{rema}

\subsection{Stabilizer groups and Lie algebras.}
For any manifold \( M \), the action of the diffeomorphism group  \( \Diff(M) \) from the right on \( \widehat H^k(M, \TT) \) by pull-back~\cite[Rk.~15]{BB},
\begin{equation}\label{act}
	(\varphi^* h) (c) := h(\varphi \circ c) \quad \text{for } 
c\in Z_k(M),
\end{equation}
extends to the exact sequence \eqref{eq:1} of abelian groups:
\begin{equation}\label{lambda}
\xymatrix{
0\ar[r]&H^k(M,\TT)\ar[d]^{\vphi^*}\ar[r] ^j
&\widehat H^{k}(M,\TT) \ar[d]^{\vphi^*}\ar[r] ^{\curv}
& \Omega_{\Z}^{k+1}(M)\ar[d]^{\vphi^*} \ar[r] & 0\\
0\ar[r]& H^k(M,\TT)\ar[r] ^j&\widehat H^{k}(M,\TT) \ar[r]^{\curv} 
& \Omega_{\Z}^{k+1}(M) \ar[r] & 0.
}
\end{equation}
We denote the stabilizer group of the curvature form $\omega \in \Omega^{k+1}_{\Z}(M)$ by 
\begin{equation*}
\Diff(M,\om) := \{ \ph \in \Diff(M)\,:\, \ph^* \om = \om \}.
\end{equation*}
The stabilizer group of a differential character 
$h \in \widehat H^{k}(M,\TT)$,
\begin{equation*}
\Diff(M,h) := \{ \ph \in \Diff(M)\,:\, \ph^* h = h \} \,,
\end{equation*}
is a subgroup of $\Diff(M,\om)$ for $\om=\curv(h)$, by~\eqref{lambda}.
If $H_k(M) = \{0\}$, then $H^k(M, \TT)$ is trivial, and thus 
$\Diff(M,h) = \Diff(M,\om)$. 

\begin{rema}\label{holo}
Let $h_{(P,\th)}\in \hat H^1(M,\TT)$ be the differential character defined by the holonomy
 of the principal $\TT$-bundle $P\to M$ with connection $\th$ as in Remark~\ref{one}. Then 
$\ph \in \Diff(M,h_{(P,\th)})$ if and only if, for every smooth 
loop $c$ in $M$, the holonomy of $c$ coincides with the 
holonomy of~$\ph\o c$. Since this is equivalent 
to the existence of a lift to a connection-preserving automorphisms $\tilde\ph \in \Aut(P,\theta)$ 
by~\cite[Thm.~2.7]{NV}, one can view $\Diff(M,h_{(P,\th)})$ as the group of 
\emph{liftable} diffeomorphisms, \cf~\cite{Kostant,Souriau}. 
\end{rema}

Although $\Diff(M,\omega)$ need not be a locally convex Lie group, we can still define 
its Lie algebra as follows.

\begin{definition}\label{Def:FakeLieAlgebra}
We call a curve $(\ph_t)_{t \in [0,1]}$ in $\Diff(M)$ \emph{smooth} if the map 
$(t,x) \mapsto (\ph_t(x),\ph^{-1}_{t}(x))$ is smooth.
For a subgroup $G \subseteq \Diff(M)$, we 
denote by $G_0$ the group of diffeomorphisms 
that are connected to the identity by a piecewise smooth path 
in \( G \). 
We denote by $\de^l \varphi$ the left logarithmic derivative 
\begin{equation}
  \label{eq:logder}
 \delta^l\ph_t(x) := {\frac{\dif}{\dif\tau}}\Big|_{t} \ph_t^{-1} \bigl(\ph_\tau(x)\bigr),
\end{equation}
yielding a curve of vector fields on $M$.
Then a Lie subalgebra $\g \subseteq \X(M)$ is the Lie algebra of 
$G \subseteq \Diff(M)$ if for 
every smooth curve $(\ph_t)_{t\in [0,1]}$ in $\Diff(M)$ 
with $\ph_0 =\id_M$, the curve $(\ph_t)_{t\in [0,1]}$ is contained in $G$
if and only if its logarithmic derivative 
$(\delta^l\ph_t)_{t\in [0,1]}$ is a curve in~$\g$.
\end{definition}

In this sense, the Lie algebra of 
$\Diff(M,\om)$ is the stabilizer Lie algebra
\begin{equation*}
\X(M,\om) := \bigl \{X \in \X(M) \ :\  L_{X}\om = 0 \bigr \}.
\end{equation*}

\subsection{Flux homomorphism.} The isotropy group  $\Diff(M,h)$ is the kernel of the flux cocycle 
\begin{equation}
  \label{eq:fluxh}
\Flux_h:\Diff(M,\om)\to H^k(M,\TT),\quad\Flux_h(\ph)=\ph^*h-h.
\end{equation}
The restriction of $\Flux_h$ to the identity component $\Diff(M,\om)_0$ takes values in the Jacobian torus 
$J^k(M)\cong\Hom(H_k(M),\RR)/\Hom(H_k(M),\ZZ)$:
\begin{equation}
\xymatrix{
\Diff(M,\om)_0\ar[d]^{\io}\ar[r]^{\Flux_\om}
& J^k(M)\ar[d]^{\exp_{\TT}} \\
\Diff(M,\om)\ar[r]^{\Flux_h} & 
H^k(M,\TT).\\
}
\end{equation}
We denote this restriction by $\Flux_\om$, since it depends only on $\om=\curv(h)$.
Indeed, we can express $\Flux_{\om}$ as 
\begin{equation}\label{eq:propereendje} \Flux_\om(\ph)=\left[\int_0^1i_{\de^l\ph_t}\om \, \dif t\right],
\end{equation}
where $(\ph_t)_{t \in [0,1]}$ is any smooth curve in $\Diff(M,\om)$ 
with $\ph_0 =\id_M$ and $\ph_1=\ph$ \cite{Calabi, Banyaga}.
To see that the expression in~\eqref{eq:propereendje} is 
indeed the restriction of 
$\Flux_h$, note that for all $c\in Z_k(M)$,
\begin{align*}
\exp_\TT\Flux_{\om}(\ph)(c)
&
=\exp_{\TT}\left(\int_c\int_0^1i_{\de^l\ph_t}\om \, \dif t\right)
=\exp_\TT\left(\int_\si\om\right)=h(\pa\si)\\
&=h(\ph\o c)-h(c)=(\ph^*h-h)(c)=\Flux_h(\ph)(c),
\end{align*}
where $\si$ is the $(k+1)$-chain swept out by the $k$-cycle $c$ under the path of diffeomorphisms $\{\ph_t\}$.
The kernel of $\Flux_\om$ is the group 
\begin{equation}\label{opt}
	\Diff_{\ex}(M,\om) :=  \Diff(M, h) \cap \Diff(M,\omega)_{0},
\end{equation}
which is independent of the choice of $h$ with $\curv(h)=\om$.
The groups $\Diff(M,h)$ and 
$\Diff_{\ex}(M,\om)$ have the same Lie algebra 
\[
\X_{\ex}(M,\om) := \bigl \{X \in \X(M,\omega)\ :\  i_{X} \om \text{ is exact} \bigr \}.
\] 

\begin{exam}\label{exem} If  $M$ is compact and $\om\in\Om^{k+1}_\ZZ(M)$, 
then the following special cases are of particular importance:
\begin{enumerate}
\item For $k=1$ and $\om$ a symplectic form, $\X_{\ex}(M,\om)$ is the Lie algebra 
$\X_{\ham}(M,\om)$ of {\it Hamiltonian vector fields} and $\Diff_{\ex}(M,\om)_0$  is the group 
$\Diff_{\ham}(M,\om)$ of {\it Hamiltonian diffeomorphisms}.

\item For $k=n-1$ and $\om= \mu$ a volume form,
we get the Lie algebra $\X_{\ex}(M,\mu)$ of {\it exact divergence free vector fields}
and the group $\Diff_{\ex}(M,\mu)_0$ of {\it exact volume-preserving diffeomorphisms}.
\end{enumerate}
The corresponding groups $\Diff_{\ham}(M,\om)$ and $\Diff_{\ex}(M,\mu)_0$ are Fr\'echet--Lie groups~\cite[Thm. 43.7, 43.12]{KrMi97}.
In both cases mentioned above, the same 
holds for the possibly non-connected groups $\Diff(M,h)$ and $\Diff_{\ex}(M,\om)$ with $\curv(h)=\om$ for $h\in\hat H^k(M,\TT)$, \cf~\cite[Prop.~3.8]{DJNV}.
\end{exam} 


\section{Tautological bundle over nonlinear Gra\ss{}mannians}\label{s3}

\subsection{Transgression of differential forms}\label{s2.1}

Let $M$ be a finite dimensional manifold, and let $S$ be a compact oriented $k$--dimensional manifold. 
The nonlinear Gra\ss{}mannian 
$\Gr^S(M)$ of all compact, 
oriented, $k$--dimensional submanifolds of $M$ of type $S$ 
is a Fr\'echet manifold, \cf~\cite[Thm.~44.1]{KrMi97}. 
The tangent space of \( \Gr^S(M) \)
at a submanifold $N$ can be identified with the space 
of smooth sections of the normal bundle $TN^\perp=(TM|_N)/TN$.
The natural surjection 
\begin{equation}\label{bundle}
\pi:\Emb(S,M)\to\Gr^S(M),\quad\pi(f)=f(S),
\end{equation}
where the orientation on the submanifold $f(S)$ is 
chosen such that the diffeomorphism $f:S\to f(S)$ is orientation-preserving, 
defines a principal bundle \( \Emb(S,M) \to \Gr^S(M) \) with structure group 
$\Diff_+(S)$, 
the group of orientation-preserving diffeomorphisms of $S$, \cf~\cite[Thm.~44.1]{KrMi97}.

The \emph{transgression}, or \emph{tilda map},~\cite{HV04}
associates to any $n$--form $\om$ on $M$ an $(n-k)$--form
$\tilde\om$ on $\Gr^S(M)$ by
\begin{equation}\label{tide}
\tilde\om_N(\tilde Y_1,\dotsc,\tilde Y_{n-k})
:=\int_N\iota_N^* (i_{Y_{n-k}}\cdots i_{Y_1}\om),
\end{equation}
where $\io_N:N\hookrightarrow M$ is the inclusion. Here $\tilde Y_j$ are tangent vectors at $N\in\Gr^S(M)$, \ie sections
of $TN^\perp$, represented by sections $Y_j$ of $TM|_N$.
Moreover, $\iota_N^*(i_{Y_{n-k}}\cdots i_{Y_1}\om)\in\Omega^k(N)$ is defined by
\begin{equation}\begin{split}
	\bigl(\iota_N^*(i_{Y_{n-k}}\cdots \, &i_{Y_1}\om)\bigr)_x (X_1, \dotsc, X_k)
		\\
		&= \omega_{\iota_N(x)} \bigl(Y_1(x), \dotsc, Y_{n-k}(x), \tangent_x \iota_N (X_1), \dotsc, \tangent_x \iota_N (X_k)\bigr)
\end{split}\end{equation}
for \( x \in N \) and \( X_i \in T_x N \),
so it does not depend on the representatives $Y_j$ of $\tilde Y_j$.
Finally, integration in~\eqref{tide} is well-defined since $N\in\Gr^S(M)$ comes with an orientation.

The natural action of the group $\Diff(M)$ on 
the nonlinear Gra\ss{}mannian $\Gr^S(M)$ is given by $\ph\cdot N=\ph(N)$. 
With the notations $\tilde\ph$ for the diffeomorphism of \( \Gr^S(M) \) induced by the action of $\ph\in\Diff(M)$ on $\Gr^S(M)$,
and  $\tilde X$ for the infinitesimal action of $X\in\X(M)$,
the following functorial identities hold: 
\begin{equation}\label{fun}
\tilde \ph^*\tilde{\om}=\widetilde{\ph^*\om}, \qquad 
L_{\tilde X}\tilde\om=\widetilde{L_X\om}, \qquad 
i_{\tilde X}\tilde\om=\widetilde{i_X\om}, \qquad 
\dd\tilde\om=\widetilde{\dd\om}.
\end{equation}

Similarly, $S$ being oriented, the {\it hat map}~\cite{Vizman2} associates to any form $\om\in\Om^n(M)$ the form 
$\widehat\om\in\Om^{n-k}(\Emb(S,M))$ defined by
\begin{equation}\label{exte}
\widehat{\om}(Z_1,\dots, Z_{n-k}):=\int_S f^*(i_{Z_{n-k}}\dots i_{Z_1}\om),
\end{equation}
with \( Z_j\in T_{f}\Emb(S,M) = \Gamma(f^*TM) \).
It is easy to check that the hat map on $\Emb(S,M)$ and the tilda map on $\Gr^S(M)$ are related by 
\begin{equation}\label{ht}
\widehat\om=\pi^*\tilde\omega
\end{equation}
for every \( \omega \in \Omega^n(M) \).

\subsection{Tautological bundle}

A nonlinear version of the tautological bundle over the usual Gra\ss{}mannian is
the associated bundle  \( \T = \Emb(S, M) \times_{\Diff_+(S)} S \) over
the nonlinear Gra\ss{}mannian $\Gr^S(M)$,
a smooth bundle with typical fiber $S$.
The tautological bundle can also be expressed as
\[
\T=\bigl\{(N,x)\in\Gr^S(M)\x M\,:\,x\in N\bigr\},
\]
with bundle projection $q_1:\T\to\Gr^S(M)$, $ q_1(N,x)=N$.
From this point of view, the quotient map $\Pi:\Emb(S,M)\x S\to\T$,
an $S$-bundle morphism over $\pi:\Emb(S,M)\to\Gr^S(M)$, becomes $\Pi(f,s)=\bigl(f(S),f(s)\bigr)$,
and the projection $q_2:\T\to M$ defined by 
$q_2(N,x)=x$ satisfies $q_2\o\Pi=\ev$.
Thus, the following diagrams commute:
\begin{equation}
	\label{eq:naturalTDiag}
\xymatrix{
\Emb(S,M)\x S\ar[d]^{p_1}\ar[r]^{\qquad\quad\Pi}
& \T\ar[d]^{q_1} \\
\Emb(S,M)\ar[r]^{\pi} & 
\Gr^S(M),\\
}
\qquad
\xymatrix{
\Emb(S,M)\x S\ar[dr]_{\ev}\ar[r]^{\qquad\quad\Pi}
& \T\ar[d]^{q_2} \\
& 
M.\\
}
\end{equation} 

The transgression~\eqref{tide} of a differential form $\om\in\Om^n(M)$  to the nonlinear Gra\ss{}mannian $\Gr^S(M)$ can be expressed with the help of the tautological bundle over the nonlinear Gra\ss{}mannian as
\begin{equation}\label{trans}
\tilde\om=(q_1)_!(q_2^*\om)\in \Om^{n-k}(\Gr^S(M)),
\end{equation}
where $(q_1)_!$ denotes integration along the fibers of the tautological bundle $q_1:\T\to\Gr^S(M)$.
Indeed, since $\pi$ is a submersion, this relation follows from
\[
\pi^*\bigl((q_1)_!(q_2^*\om)\bigr)=(p_1)_!(\Pi^*q_2^*\om)
\stackrel{\eqref{eq:naturalTDiag}}{=}(p_1)_!(\ev^*\om)=\hat\om\stackrel{\eqref{ht}}{=}\pi^*\tilde\om,
\]
using that fiber integration commutes with pull-back~\cite{GHV}.

In a similar spirit, tautological bundles over manifolds of nonlinear flags in $M$, \ie nested sets of submanifolds of $M$,
have been used in~\cite{HV20} to handle the transgression of differential forms on $M$ to differential forms on the manifold of nonlinear flags.

\subsection{Transgression of differential characters}
\label{sec:transgression_of_differential_characters}

The pull-back and the fiber integration make sense also for differential characters~\cite[Ch.~7]{BB}.
This allows us to  define the {\it transgression of a differential character} 
$h\in\hat H^{n-1}(M,\TT)$ to the nonlinear Gra\ss{}mannian \( \Gr^S(M) \) with the help of the tautological bundle $\T$, in the same way as in formula~\eqref{trans}:
\begin{equation}\label{transh}
\tilde h=(q_1)_!(q_2^*h)\in\hat H^{n-k-1}(\Gr^S(M),\TT).
\end{equation}
The transgression map  for differential characters
\begin{equation}\label{tm}
\hat H^{n-1}(M,\TT)\longrightarrow\hat H^{n-k-1}(\Gr^S(M),\TT)
\end{equation}
 has functorial properties that we describe below. 

\begin{prop}\label{ttau}
The transgression map  for differential characters in~\eqref{tm}
makes the following diagram commutative:
\begin{equation*}
\xymatrix{
H^{n-1}(M,\TT)\ar[d]^{\,\tilde{ }}\ar[r]^j &\widehat H^{n-1}(M,\TT) \ar[d]^{\,\tilde{ }}\ar[r] ^{\curv}
& \Omega_{\Z}^{n}(M)\ar[d]^{\,\tilde{ }}\\
H^{n-k-1}(\Gr^S(M),\TT)\ar[r]^j &\widehat H^{n-k-1}(\Gr^S(M),\TT) \ar[r]^{\ \ \ \curv} & 
\Omega_{\Z}^{n-k}(\Gr^S(M)),\\
}
\end{equation*}
where the transgression \( \tilde{a} \in H^{n-k-1}(\Gr^S(M),\TT) \) of \( a \in H^{n-1}(M,\TT) \) is defined by \( \tilde{a} = (q_1)_!(q_2^* a) \).
In particular, $\curv(\tilde h)=\widetilde{\curv(h)}$.
\end{prop}

\begin{proof}
Using the compatibility of both  the pull-back and  the fiber integration 
of differential characters with the curvature 
explained in~\cite[Rk.~15, Def.~38]{BB},
we compute
\[
\curv(\tilde h)=\curv\bigl((q_1)_!(q_2^*h)\bigr)=(q_1)_!\curv(q_2^*h)=(q_1)_!q_2^*\curv(h)=\widetilde{\curv(h)}.
\]
This shows the commutativity of the right side of the diagram.

To prove the commutativity of the left side, we use~\cite[Prop.~48]{BB}:
\[
\widetilde{j(a)} = (q_1)_!\bigl(q_2^*j(a)\bigr)=(q_1)_!\bigl(j(q_2^*a)\bigr)=j\bigl((q_1)_!(q_2^*a)\bigr)=j(\tilde a),
\]
for all $a\in H^{n-1}(M,\TT)$.
\end{proof}

\begin{coro}\label{mag}
The transgression map $\Om^n(M)\to \Om^{n-k}(\Gr^S(M))$, $\om\mapsto \tilde\om$,
preserves the integrality of differential forms.
\end{coro}
\begin{proof}
Every integral form $\om\in\Om^n_\ZZ(M)$ is the curvature of a character $h\in\hat H^{n-1}(M,\TT)$.
By Proposition~\ref{ttau}, its transgression $\tilde\om\in\Om^{n-k}(\Gr^S(M))$ 
is the curvature of the transgressed character $\tilde h\in\hat H^{n-k-1}(\Gr^S(M),\TT)$, 
hence an integral form.
\end{proof}

Consider the natural action of \( \Diff(M) \) on \( \T \) defined by assigning to every diffeomorphism 
\( \ph \in \Diff(M) \) the diffeomorphism $\ph_\T:\T\to\T$ given by $\ph_\T(N,x)=\bigl(\ph(N),\ph(x)\bigr)$.
The following diagram commutes:
\begin{equation}\label{x}
\xymatrix{
\Gr^S(M)\ar[d]^{\tilde\ph}&\T\ar[l]_{\qquad q_1}\ar[d]^{\ph_\T}\ar[r] ^{q_2}
& M\ar[d]^{\ph} \\
\Gr^S(M) &\T\ar[l]_{\qquad q_1}\ar[r]^{q_2} & 
M.\\
}
\end{equation}

\begin{prop}
\label{prop:transgressionDiffeoEquiv}
The transgression map  for differential characters defined in~\eqref{tm}
is compatible with the action of $\Diff(M)$, that is, \( \widetilde{\ph^*h}=\tilde\ph^*\tilde h \) holds for every \( h\in\widehat H^{n-1}(M,\TT) \) and \( \ph\in\Diff(M) \).
\end{prop}
\begin{proof}
The claim follows from the direct calculation
\[
\widetilde{\ph^*h}=(q_1)_!(q_2^*\ph^*h)=(q_1)_!(\ph_\T^*q_2^*h)=\tilde\ph^*\bigl((q_1)_!(q_2^*h)\bigr)=\tilde\ph^*\tilde h,
\]
by~\cite[Def.~38]{BB} and~\eqref{x}.
\end{proof}

The case of a volume form $\om\in\Om^n(M)$ and \( k = n - 2 \), 
with $\tilde\om\in\Om^2(\Gr^S(M))$, has been considered in~\cite[Theorem 1]{HV04}, where
a principal circle bundle $(\P,\th)$ over $\Gr^S(M)$ with curvature $\tilde\om$ has been constructed 
through its \v Cech 1-cocycle. 
In our setting, we get such a prequantum bundle over $\Gr^S(M)$ using the transgression $\tilde h\in \hat H^1(\Gr^S(M),\TT)$
of a differential character $h \in \hat H^{n-1}(M,\TT)$ with curvature $\om$
\begin{footnote}
{We do not know whether the holonomy $h_{(\P,\th)}\in\hat H^1(\Gr^S(M),\TT)$ 
of the principal bundle constructed in~\cite{HV04} 
coincides with the transgression of a differential character
$h \in \hat H^{n-1}(M,\TT)$. The differential character 
$h_{(\P,\th)}$ may  differ 
from $\tilde h\in\hat H^1(\Gr^S(M),\TT)$ by an element in 
$H^1(\Gr^S(M),\TT)$.
}\end{footnote}.
This is described in the next theorem, a direct consequence of Proposition~\ref{ttau} and Remark~\ref{one}.
\prequantum

A \emph{hat product} of differential characters has been introduced in~\cite[Section~4]{DJNV}
yielding the transgression of a pair of differential characters from $S$ and from $M$
to a differential character on $C^\oo(S,M)$.
We specialize it here to a \emph{hat map} that assigns to every \( h\in\hat H^{n-1}(M,\TT) \) 
the differential character
\begin{equation}
\hat h:=(p_1)_!(\ev^* h) \in \hat H^{n-k-1}(\Emb(S,M),\TT),
\end{equation}
where \( p_1: \Emb(S, M) \times S \to \Emb(S,M) \) is the natural projection.
\begin{prop}
\label{prop:piRelationOfTransgressions}
Given a differential character $h$ on $M$, 
the differential character $\hat h$ on $\Emb(S,M)$ is $\Diff_+(S)$ invariant 
and coincides with the pull-back 
$\pi^*\tilde h$ 
of the differential character $\tilde h$ on $\Gr^S(M)$ 
under the map  \( \pi: \Emb(S, M) \to \Gr^S(M) \) from~\eqref{bundle}. 
\end{prop}
\begin{proof}
	By~\eqref{eq:naturalTDiag}, the transgression diagrams for $\Emb(S,M)$ and for $\Gr^{S}(M)$ are connected by the projection $\pi \colon \Emb(S,M) \rightarrow \Gr^{S}(M)$, yielding the commutative diagram
\begin{equation*}
\begin{tikzcd}[row sep=scriptsize, column sep=scriptsize]
\Emb(S,M)\times S \arrow[dd, "p_1", near start]\arrow[dr, "\ev"]\arrow[rr, two heads, "\Pi"]& 
& \mathcal{T}\arrow[dr, "q_2"]\arrow[dd, "q_1",near start] & \\ 
& M \arrow[rr, equal, crossing over] & & M\\
\Emb(S,M)\arrow[rr, two heads, "\pi"]& & \Gr^{S}(M). & 
\end{tikzcd} 
\end{equation*}
	Accordingly, we have
	\begin{equation}
		\pi^* \tilde{h}
			= \pi^* (q_1)_!(q_2^*h)
			= (p_1)_!(\Pi^*q_2^* h)
			= (p_1)_!(\ev^* h)=\hat h,
	\end{equation}
	where we used the naturality of fiber integration~\cite[Def.~38]{BB}.
\end{proof}
Note that the \( \Diff_+(S) \)-invariance of the differential character $\hat h$ is not enough to conclude that \( \hat h \) descends to a differential character \( \tilde{h} \) on \( \Gr^S(M) \).
In fact, we are not aware of a direct proof that \( \hat{h} \) descends without using transgression to \( \Gr^S(M) \) as defined in~\eqref{tm}.
Even for differential forms invariance is not enough to conclude that they descend to the base (they have to be basic!).


\section{Integration of Lichnerowicz cocycles}

Let $M$ be a closed, connected manifold of dimension $n\geq 2$, and let 
$\mu\in\Om^n_\ZZ(M)$ be an integral volume form.
Then each oriented codimension two submanifold $N\subset M$ determines a 2-cocycle 
on the Lie algebra $\X_{\ex}(M,\mu)$ of exact divergence free vector fields by
\begin{equation}\label{L}
\ps_N(X,Y):=\int_N i_X i_Y\mu.
\end{equation} 
If $[\et]\in H^2_{\rm dR}(M)$ is Poincar\'e dual to $[N]\in H_{n-2}(M)$,
then~\eqref{L} is cohomologous to the \emph{Lichnerowicz cocycle} \cite{Lichnerowicz}
$\ps_\et(X,Y):=\int_M\et(X,Y) \, \mu$, cf.~\cite{Vizman1}.

In~\cite{DJNV}, we used transgression of differential characters over mapping spaces 
to integrate these Lie algebra cocycles  to smooth central extensions of the Lie group $\Diff_{\ex}(M,\mu)$.
In this section we apply ideas from~\cite{Ismagilov,HV04} to show that the same can be done using transgression over 
nonlinear Gra\ss{}mannians, and we indicate the relation between these two methods.

\subsection{Construction using the nonlinear Gra\ss{}mannian \texorpdfstring{$\Gr^{S}(M)$}{GrSN(M)}}
First we turn to the use of transgression over nonlinear Gra\ss{}mannians.
Here $S$ is a closed, oriented manifold of dimension $n-2$.
Let $h\in \widehat{H}^{n-1}(M,\TT)$ be a differential character on $M$ with curvature the integral volume form $\mu$,
\ie a group homomorphism $h:Z_{n-1}(M)\to\TT$ that assigns to each boundary its enclosed volume modulo $\ZZ$.
Using the transgression diagram
\begin{equation}
\begin{tikzcd}[row sep=small]
 & \T \ar[dr, "q_2"] \ar[dl, swap, "q_1"]&\\
\Gr^{S}(M)& & M,
\end{tikzcd}
\end{equation}
we saw that $h$ yields
$
\widetilde{h} := (q_1)_! (q_{2}^*h)
$ 
in 
$
\widehat{H}^1(\Gr^{S}(M),\TT),
$
a differential character with curvature 
$
\widetilde{\mu} := (q_1)_! (q_2^*\mu)
$ 
in 
$
\Omega^2_{\Z}(\Gr^S(M))
$.
Since differential characters of degree one correspond to isomorphism classes of principal 
circle bundles with connection (see Remark~\ref{one}), this yields an isomorphism class of principal circle bundles $\cP \rightarrow \Gr^{S}(M)$ equipped with a connection 1-form $\Theta_{\cP} \in \Omega^1(\cP)$ 
whose curvature is $\widetilde{\mu}$. 
In fact, the closed 2-form $\tilde\mu$ is symplectic~\cite{Marsden-Weinstein,Ismagilov,HV04}, so that $\cP \rightarrow \Gr^{S}(M)$
is a prequantum circle bundle.
As  any two differential characters  $h$ and $h'$ with curvature $\mu$ differ by
an element of  $H^{n-1}(M,\TT)$,
we obtain a distinguished class of `transgressed' prequantum bundles of \( (\Gr^S(M), \tilde{\mu}) \), forming a  
 torsor over $H^{n-1}(M,\TT)$.

Let $\Gr^{S}_{N}(M)$ be the connected component of $N\in \Gr^{S}(M)$ and let \( \cP_N \) denote the restriction of \( \cP \) to \( \Gr^{S}_{N}(M) \).
The quantomorphism group $\Aut\bigl(\cP_N,\Theta_{\cP}\bigr)$ is then a central extension 
\begin{equation}\label{eq:preqCE}
 \TT \to \Aut\bigl(\cP_N,\Theta_{\cP}\bigr) \to
\Diff\bigl(\Gr_{N}^{S}(M),\tilde h\bigr)
\end{equation}
of the group $\Diff\bigl(\Gr_{N}^{S}(M),\tilde h\bigr)$ of holonomy-preserving diffeomorphisms by the circle group $\TT$, cf.~\cite{Kostant,Souriau}, and~\cite{NV} for the infinite dimensional case.

The group $\Diff_{\ex}(M,\mu)$ in Example~\ref{exem} (with identity component the group of exact volume-preserving diffeomorphisms) might be non-connected.
Being a subgroup of $\Diff(M,\mu)_0$, by continuity, the natural action \( \sigma \) of \( \Diff_{\ex}(M,\mu) \subseteq \Diff(M)_0 \) leaves the connected component \( \Gr^S_N(M) \) invariant.
Moreover, Proposition~\ref{prop:transgressionDiffeoEquiv} implies that \( \Diff_{\ex}(M,\mu) \subseteq \Diff(M, h) \) preserves \( \tilde h \).
Thus, a central group extension of $\Diff_{\ex}(M,\mu)$ can be obtained by pull-back of the prequantization central extension~\eqref{eq:preqCE} by the action $\si$,
\begin{equation}  \label{eq:diag}
\xymatrix{
1\ar[r]&\TT\ar[r] &\Aut\bigl(\cP_N,\Theta_{\cP}\bigr)\ar[r]^{\ } &
\Diff\bigl(\Gr_{N}^{S}(M),\tilde h\bigr)\ar[r]&1\\
1\ar[r]&\TT\ar[u]\ar[r]& \widehat{\Diff}_{\ex}(M,\mu)\ar[u] \ar[r] &
\Diff_{\ex}(M,\mu)\ar[u]^\si\ar[r]&1.\\
}
\end{equation}
The group $\Aut\bigl(\cP_N,\Theta_{\cP}\bigr)$ need not be a locally convex Lie group. But
it follows from the generalization of~\cite[Thm.~3.4]{NV} to non-connected Lie groups given in~\cite[Thm.~A.1]{DJNV} that
the pull-back $\widehat\Diff_{\ex}(M,\mu)$ 
is  a Lie group, with the manifold structure coming from the pull-back of $\cP_N \rightarrow \Gr^{S}_{N}(M)$
along the orbit map $\Diff_{\ex}(M,\mu)\rightarrow \Gr^{S}_{N}(M)$,
cf.~\cite[Rk.~4]{HV04}.

Although~\eqref{eq:preqCE} is not a central extension of locally convex Lie groups, its 
Lie algebra extension in the sense of Definition~\ref{Def:FakeLieAlgebra} is the one with the Kostant--Souriau cocycle
\[
 \psi_{KS}(\widetilde{Y}_1,\widetilde{Y}_2) = \widetilde{\mu}_{N}(\widetilde{Y}_1,\widetilde{Y}_2), 
 \quad \widetilde{Y}_1,\widetilde{Y}_2 \in T_{N}\Gr_{N}^{S}(M) = \Gamma(TN^{\perp}).
\]
The central Lie algebra extension corresponding to the Lie group extension \( \widehat\Diff_{\ex}(M,\mu) \to \Diff_{\ex}(M,\mu) \) in \eqref{eq:diag} is the pull-back along the infinitesimal action $\sigma_* \colon \X_{\ex}(M,\mu) \rightarrow \X(\Gr_{N}^{S}(M),\widetilde{\mu})$, \( X \mapsto \tilde{X} \) of the Kostant--Souriau cocycle,
\[
 \psi_{KS}(\sigma_*X,\sigma_*Y) = \widetilde{\mu}_{N}(\tilde{X},\tilde{Y})
 \stackrel{\eqref{fun}}{=}\widetilde{i_Y i_X\mu}(N)\stackrel{\eqref{tide}}{=}\int_N i_Y i_X\mu 
\stackrel{\eqref{L}}{=}\ps_N(X,Y).
\]
It follows that $\TT \rightarrow \widehat\Diff_{\ex}(M,\mu)\rightarrow \Diff_{\ex}(M,\mu)$ 
is a central extension of Fr\'echet--Lie groups integrating the Lichnerowicz cocycle $\ps_N$ defined in~\eqref{L}. 
We have thus proven the following result:

\extensionsIntegratingLichnerowicz*

\begin{rema}
	In~\cite{Ismagilov,HV04} another approach to integrate the Lichnerowicz cocycle $\ps_N$ to smooth central extensions of the group of exact volume-preserving diffeomorphisms, \ie the identity component of $\Diff_{\ex}(M,\mu)$, is presented.
	We do not know whether these extensions coincide with the extensions given in Theorem~\ref{extensionsIntegratingLichnerowicz}.
	The construction in~\cite{HV04} also uses a prequantum bundle over $\Gr^{S}(M)$, which is constructed by hand in a rather complex process through its \v Cech 1-cocycle.
	The main novelty of the present work is the use of differential characters to obtain the prequantum bundle over $\Gr^{S}(M)$ that is needed for the construction of the smooth central extension \( \widehat\Diff_{\ex}(M,\mu) \).
\end{rema}

\subsection{Construction using the embedding space \texorpdfstring{$\Emb(S,M)$}{Emb(S, M)}}
\label{sec:mapping}

In~\cite{DJNV}, we constructed a central extension of $\Diff_{\ex}(M,\mu)$ using transgression to the mapping space $C^{\infty}(S,M)$, where $S$ is a closed, oriented manifold of dimension $n-2$.
We now briefly recall the construction in~\cite{DJNV} and adapt it to the  case of embeddings. 
Using the transgression diagram
\begin{equation}
\xymatrix@C-2pc@R-1pc{
&{\Emb(S,M) \x S}\ar[dr]_{\ev}\ar[dl]^{p_1}&\\
\Emb(S,M)& & 
M,
}
\end{equation}
a differential character $h\in \widehat{H}^{k-1}(M,\TT)$ with curvature $\mu$ transgresses to 
the differential character
$\hat{h} := (p_1)_! (\ev^*h)$ 
in 
$\widehat{H}^{1}(\Emb(S,M),\TT)$,
with curvature 
$\hat{\mu} := (p_1)_! (\ev^*\mu)$ 
in
$\Omega^2_{\Z}(\Emb(S,M))$, see \cref{sec:transgression_of_differential_characters}.
This yields a principal $\TT$-bundle $\cQ \rightarrow \Emb(S,M)$
with connection $\Theta_{\cQ}$ and curvature $\hat{\mu}$.

For a smooth embedding $f \colon S \rightarrow M$, let 
$\Emb_{f}(S,M)$ be the connected component of $f$ in $\Emb(S,M)$ and let \( \cQ_f \) denote the restriction of \( \cQ \) to \( \Emb_f(S,M) \). 
The group of connection-preserving automorphisms of $\cQ_f\rightarrow{\Emb_{f}(S,M)}$ is a central extension of the group of $\hat{h}$-preserving diffeomorphisms of $\Emb_f(S,M)$.
If we pull this back along the action 
$\sigma \colon \Diff_{\ex}(M,\mu) \to \Diff(\Emb_f(S,M),\hat{h})$
of $\Diff_{\ex}(M,\mu)$ on $\Emb_{f}(S,M)$, we obtain a central
extension $\widehat{\Diff}{}'_{\ex}(M,\mu)$ of $\Diff_{\ex}(M,\mu)$ by $\TT$,
\begin{equation*}\label{big2}
\xymatrix{
1\ar[r]&\TT\ar[r] &\Aut(\cQ_f,\Theta_{\cQ}) \ar[r] & \Diff(\Emb(S,M),\hat{h}) \ar[r]&1\\
1\ar[r]&\TT\ar[r]\ar[u]& \widehat{\Diff}{}^{'}_{\ex}(M,\mu)\ar[u] \ar[r] &
\Diff_{\ex}(M,\mu)\ar[u]_\si\ar[r]&1.
}
\end{equation*}
The group $\widehat{\Diff}{}'_{\ex}(M,\mu)$ is a Fr\'echet--Lie group, and the Lie algebra
\( 2 \)-cocycle corresponding to this central extension is the Lichnerowicz cocycle~\eqref{L} with \( N = f(S) \) by~\cite[Thm.~5.4]{DJNV}.

\subsection{Comparison}\label{sec:comparison}
Since the corresponding Lie algebra cocycles coincide, the two central extensions $\widehat{\Diff}_{\ex}(M,\mu)$ and $\widehat{\Diff}{}'_{\ex}(M,\mu)$ constructed using transgression, respectively, on the nonlinear Gra\ss{}mannian $\Gr_{N}^{S}(M)$ and on the embedding space $\Emb_{f}(S,M)$, are isomorphic on the infinitesimal level if \( f(S) = N \).
To show that they are isomorphic also as central extensions of Lie groups, we will use the following general result.
\begin{lemm}\label{general}
	Assume that a Lie group \( G \) acts on the connected manifolds \( M \) and \( N \).
	Let \( P \to M \) be a principal \( \TT \)-bundle with connection \( \theta \) whose holonomy is \( G \)-invariant.
	For a  \( G \)-equivariant map \( \psi: N \to M \), let \( \hat{G} \) be the central extension of \( G \) obtained from the bundle \( (P, \theta) \) and let \( \hat{G}_\psi \) the one obtained from the pull-back bundle \( (\psi^* P, \psi^* \theta) \).
	For every \( \phi \in \Aut(P, \theta) \) covering the action of some \( g \in G \), the map
	\begin{equation}
		\label{eq:comparsion:inducedAutomorphism}
		\overline{\phi}: \psi^* P \to \psi^* P, \qquad (n, p) \mapsto \bigl(g \cdot n, \phi(p)\bigr).
	\end{equation}
	is a bundle automorphism of \( \psi^* P \) which preserves the pull-back connection \( \psi^* \theta \) and covers the action of \( g \in G \) on \( N \).
	The resulting Lie group homomorphism \( \hat{G} \ni \phi \mapsto \overline{\phi} \in \hat{G}_\psi \) yields a smooth isomorphism of central extensions.
\end{lemm}
\begin{proof}
	Let \( \phi \in \Aut(P, \theta) \) covering the action of \( g \in G \).
	By the \( G \)-equivariance of \( \psi \), the prescription~\eqref{eq:comparsion:inducedAutomorphism} indeed defines a smooth bundle map \( \overline{\phi}: \psi^* P \to \psi^* P \) that covers the action of \( g \) on \( N \) by construction.
	It is straightforward to see that \( \overline{\phi} \) is a bundle automorphism preserving the connection \( \psi^* \theta \), as \( \phi \in \Aut(P, \theta) \).
	Clearly, \( \hat{G} \ni \phi \mapsto \overline{\phi} \in \hat{G}_\psi \) is a group homomorphism fitting into the commutative diagram
	\begin{equation}
		\xymatrix{
			1\ar[r]	&\TT \ar[r] 			&\hat{G}_\psi \ar[r] 	& G \ar[r]			&1\\
			1\ar[r]	&\TT \ar[r]\ar[u]_\id	&\hat{G} \ar[u]\ar[r] 	& G \ar[u]_\id\ar[r]&1
		}
	\end{equation}
	and it thus yields an isomorphism of central extensions~\cite[Def.~V.1.1]{Neeb2006}.
\end{proof}
\begin{coro}
	\label{prop:comparison:withDiffCharacter}
	Assume that a Lie group \( G \) acts on the connected manifolds \( M \) and \( N \).
	Let \( h_M \in \widehat{H}^1(M,\TT) \) and \( h_N \in \widehat{H}^1(N,\TT) \) be \( G \)-invariant differential characters.
	If there exists a \( G \)-equivariant map \( \psi: N \to M \) such that \( \psi^* h_M = h_N \), then the central extensions of \( G \) obtained from \( h_M \) and from \( h_N \) are isomorphic.
\end{coro}

Let us return to the main objective of comparing the central extensions $\widehat{\Diff}_{\ex}(M,\mu)$ and $\widehat{\Diff}{}'_{\ex}(M,\mu)$ constructed using transgression on the nonlinear Gra\ss{}mannian $\Gr_{N}^{S}(M)$ and on the embedding space $\Emb_{f}(S,M)$.
Here and in the following, \( f \) and \( N \) are related via \( f(S) = N \).
For \( h\in \widehat{H}^{n-1}(M,\TT) \), the transgressed differential characters $\widehat{h} \in \widehat{H}^1(\Emb(S,M),\TT)$ and $\widetilde{h} \in \widehat{H}^1(\Gr^{S}(M),\TT)$ are related by 
$\widehat{h} = \pi^*\widetilde{h}$ according to Proposition~\ref{prop:piRelationOfTransgressions}.
Since \( \pi \) is \( \Diff_{\ex}(M, \mu) \)-equivariant, Corollary~\ref{prop:comparison:withDiffCharacter} yields the following comparison result.
\begin{prop}
	Let $M$ be a compact manifold of dimension \( n \) endowed with a differential character $h\in \widehat{H}^{n-1}(M,\TT)$ with curvature $\mu$.
	Let \( S \) be a closed, oriented manifold of dimension \( n-2 \).
	For every embedding \( f: S \to M \), the central extensions $\widehat{\Diff}_{\ex}(M,\mu)$ and $\widehat{\Diff}{}'_{\ex}(M,\mu)$ constructed using transgression to the nonlinear Gra\ss{}mannian $\Gr_{f(S)}^{S}(M)$ and to the embedding space $\Emb_{f}(S,M)$ are smoothly isomorphic.
\end{prop}

In particular, it follows that the extensions obtained from $\cP \rightarrow \Gr^{S}(M)$ over different connected components $\Gr_{N}^{S}(M)$ and $\Gr_{N'}^{S}(M)$ are isomorphic as soon as $N$ and $N'$ are the images of homotopic embeddings.
Furthermore, if a submanifold $N \subseteq M$ is the image of an embedding $f \colon S \rightarrow M$
that is homotopic to a `thin' map $g \colon S \rightarrow M$ (meaning that $\int_{S}g^*\alpha =0$ for all 
$\alpha \in \Omega^{n-2}(M)$), then the corresponding central extension
$\widehat{\Diff}_{\ex}(M,\mu)$ is trivial at the Lie algebra level.

\begin{rema}[Comparison of the constructions]
As \( \pi: \Emb(S, M) \to \Gr^{S}(M) \) is surjective, every extension of \( \Diff_{\ex}(M,\mu) \) that can be obtained using the nonlinear Gra\ss{}mannian can also be obtained starting from the embedding space, showing that the two approaches are essentially equivalent.
An advantage of the construction using Gra\ss{}mannians is that the trangressed 2-form $\widetilde{\mu}$ 
on $\Gr^{S}(M)$ is not only 
closed, but also nondegenerate; in contrast to the principal circle bundle over $\Emb(S,M)$, the circle bundle $\cP_{N}$ (with connection $\Theta_{\cP}$) over 
$\Gr_{N}^{S}(M)$ (with symplectic form $\tilde{\mu}$) is thus a prequantum bundle.
Under certain conditions, the Grassmannian $\Gr_{N}^{S}(M)^{+}$ of oriented embedded submanifolds admits a 
formally integrable almost complex structure \cite{Brylinski, Lempert1993, LeBrun1993, Verbitsky2012, FL21}.
In future work, we hope to use this almost complex structure in order to define a suitable polarization on the associated prequantum line bundle $\mathbb{L} = \cP_{N}\times_{\TT}\mathbb{C}$,
making the step from prequantization to quantization.
\end{rema}

\begin{rema}[Construction based on \( C^{\infty}(S,M) \)]
	In~\cite{DJNV}, we constructed a central extension of $\Diff_{\ex}(M,\mu)$ using transgression to the mapping space $C^{\infty}(S,M)$ instead of the embedding space \( \Emb(S, M) \) as described in \cref{sec:mapping}.
	Clearly, the inclusion \( \iota: \Emb(S, M) \to C^{\infty}(S,M) \) is \( \Diff_{\ex}(M, \mu) \)-equivariant and relates the transgressed characters $\widehat{h} \in \widehat{H}^1(\Emb(S,M),\TT)$ and $\overline{h} \in \widehat{H}^1(C^{\infty}(S,M),\TT)$ by $\widehat{h} = \iota^*\overline{h}$.
	Thus, Corollary~\ref{prop:comparison:withDiffCharacter} implies that the central extensions of \( \Diff_{\ex}(M,\mu) \) obtained from \( (\Emb_f(S, M), \widehat{h}) \) and from \( (C^{\infty}_f(S,M), \overline{h}) \) are smoothly isomorphic for every embedding \( f \in \Emb(S, M) \).
	However, there could be connected components of \( C^{\infty}(S,M) \) that do not contain an embedding\footnote{For example,~\cite[Corollary~1.3]{Kronheimer1997} yields immersions of a closed surface into a closed \( 4 \)-manifold that are not homotopic to an embedding.}, and such connected components might give rise to central extensions of $\Diff_{\ex}(M,\mu)$ which are not accessible via embedding spaces and nonlinear Gra\ss{}mannians.
\end{rema}

\subsection{The special case \texorpdfstring{$H^1(M,\Z)=\{0\}$}{H1(M,Z)}}
In this section we treat the special case when 
$H^1(M,\Z)=\{0\}$. 
The group $\Diff_{\ex}(M,\mu)_0$ 
of exact volume-preserving diffeomorphisms then coincides with 
the  smooth arc-component of the identity 
of the group $\Diff(M,\mu)$ of volume-preserving diffeomorphisms.
Indeed, $H^{n-1}_{\rm dR}(M) = \{0\}$, since  by Poincar\'e duality
$H_{n-1}(M,\Z) \cong H^1(M,\Z)=\{0\}$.
Since $H^{n-1}(M,\TT)=\{0\}$, it now follows from the exact sequence~\eqref{eq:1} that 
$\curv:\hat H^{n-1}(M,\TT)\to\Om^n_{\Z}(M)$ is an isomorphism.
In particular, to any volume form $\mu$ with integral periods
one can assign in a natural way a uniquely determined 
differential character $h_\mu\in\hat H^{n-1}(M,\TT)$ with curvature $\mu$.
It is defined as $h_{\mu}(c) := \exp_{\TT}(\int_{D}\mu)$ for any 
$c\in Z_{n-1}(M)$ and
$D\in C_{n}(M)$ with $\partial D = c$. Such smooth singular chains  $D$ exist because the smooth singular homology group $H_{n-1}(M) = 0$, 
and the result is independent of the choice of $D$, because $\mu$ is integral.

If $H^1(M,\Z) = \{0\}$, the diffeomorphism groups that preserve 
the volume form $\mu$ and the holonomy $h_\mu$ coincide:
$\Diff(M,\mu)=\Diff(M,h_\mu)$ by the right commutative diagram in~\eqref{lambda}.
We thus have
$$\Diff_{\ex}(M,\mu) \stackrel{\eqref{opt}}{=} \Diff(M,\mu)_{0} = \Diff_{\ex}(M,\mu)_{0} = \Diff(M,h_{\mu})_{0}.$$
We can therefore canonically associate to every homotopy class $[f]$ of an embedding $f \colon S \rightarrow M$ 
the isomorphism class of a central Lie group extension
\[
 \TT \rightarrow \widehat{\Diff}(M,\mu)_{0} \rightarrow \Diff(M,\mu)_0 
\]
with cocycle 
\(\psi(X,Y) = \int_{S} f^*(i_{X}i_{Y}\mu)\).

\begin{exam}
When $M$ is a surface with $H_1(M)=\{0\}$ and $S=\{*\}$ one point, then the nonlinear Gra\ss{}mannian is $\Gr^S(M)=M$ and 
the transgression map is the identity.
Thus the action $\si$ is the identity,
and the two rows in~\eqref{eq:diag} coincide.

For a 2-sphere $M=S^2$ the group of volume-preserving diffeomorphisms is connected: 
it has the rotation group ${\rm SO}(3)$ as a deformation retract by~\cite{Smale}.
Thus $\Diff_{\ex}(S^2,\mu)_0=\Diff(S^2,\mu)$ for any volume form $\mu$ on the 2-sphere.
Since $H^1(S^2,\ZZ)=\{0\}$, each $\mu\in\Om_\ZZ^2(S^2)$
determines uniquely a differential character $h_\mu\in\hat H^1(S^2,\TT)$.
It is the holonomy of a suitable `Hopf fibration' $(S^3,\th)\to(S^2,\mu)$,
a principal circle bundle with connection $\th$ and curvature~$\mu$.
\end{exam}
The next example shows that every knot on the \( 3 \)-sphere yields a central extension of the identity component of the volume-preserving diffeomorphism group.
\begin{exam}
On the 3-sphere $S^3 \simeq \SU(2)$ we consider the bi-invariant volume form 
\[
\mu=\kappa(\th\wedge[\th\wedge\th]),
\]
where $\th$ denotes the Maurer-Cartan $1$-form on $\SU(2)$.
We scale the Killing form \( \kappa \) such that the cohomology class $[\mu]$ of the volume form
generates the image of $H^3(\SU(2),\Z)\simeq\Z$ in $H^3(\SU(2),\R)$. 
Let $h_\mu$ be the unique differential character with curvature $\mu$.
Each embedded knot in \( S^3 \)
yields a central Lie group extension of $\Diff(S^3,\mu)_0$.
Since $H^2(S^3)=0$, the corresponding  extension of the Lie algebra of divergence free vector fields is trivial~\cite{Ro95}.
\end{exam}
In the next example, the second cohomology of the ambient space is non-trivial.
\begin{exam}
Let $M=S^2\x S^2$ be endowed with the volume form $\mu=p_1^*\nu\wedge p_2^*\nu$,
with $\nu$ an area form on $S^2$ of total area $1$.
Again we have a unique differential character $h_\mu$ on $S^2\x S^2$ with the volume form $\mu$ with integral periods as curvature.
The isomorphism classes of central extensions of the Lie algebra of divergence free vector fields on $S^2\x S^2$ 
are classified by $H^2(S^2\x S^2)\simeq \R^2$ \cite{Ro95}.

For each embedded $S^2$ in $S^2\x S^2$ one gets in a canonical way
a central extension of the identity component of the group of volume-preserving diffeomorphisms of $S^2\x S^2$.
For instance the diagonal embedding yields a Lie group extension that integrates a non-trivial Lie algebra extension:
the Lichnerowicz 2-cocycle
$\ps(X,Y)=\int_{\Delta_{S^2}}i_Xi_Y\mu$ has non-trivial cohomology class.
\end{exam}



\vspace*{7ex}

\noindent Tobias Diez. Institute of Applied Mathematics, Delft University of Technology, 2628 XE Delft, The Netherlands. \texttt{t.diez@tudelft.nl}
\\[1ex]
\noindent Bas Janssens. Institute of Applied Mathematics, Delft University of Technology, 2628 XE Delft, The Netherlands. \texttt{b.janssens@tudelft.nl}
\\[1ex]
\noindent Karl-Hermann Neeb. Department of Mathematics, FAU Erlangen-N\"urnberg, 91058 Erlangen, Germany. \texttt{neeb@math.fau.de}
\\[1ex]
\noindent Cornelia Vizman. Department of Mathematics, West University of Timi\c soara. RO--300223 Timi\c soara. Romania. \texttt{cornelia.vizman@e-uvt.ro}

\end{document}